\theoremstyle{plain}
\newtheorem*{theorem*}{Theorem}
\newtheorem{theorem}{Theorem}
\newtheorem*{Theorem A}{Theorem A}
\newtheorem*{th*}{Theorem}
\newtheorem{lemma}{Lemma}
\newtheorem*{Lemma A}{Lemma A}
\newtheorem*{Lemma B}{Lemma B}
\newtheorem*{Lemma C}{Lemma C}
\newtheorem*{Lemma D}{Lemma D}
\newtheorem*{Lemma E}{Lemma E}
\newtheorem*{Lemma F}{Lemma F}
\newtheorem*{lemma*}{Lemma}
\theoremstyle{definition}
\newtheorem{definition}{Definition}
\newtheorem{proposition}{Proposition}
\theoremstyle{remark}
\numberwithin{equation}{section}
\def\p{\psi}
\def\D{{\mathbb D}}
\def\({\left(}       \def\){\right)}
\numberwithin{equation}{section}
\begin{document}

\title[Schatten Class Hankel Operators]{Schatten Class Hankel Operators on Large Bergman spaces}
\author{P. Galanopoulos}
\address{Dept. of Mathematics, Aristotle University of Thessaloniki, 54124, Thessaloniki, Greece }
\email{petrosgala@math.auth.gr}

\keywords{Hankel Operators, weighted Bergman Spaces, Schatten Classes}
\subjclass[2010]{Primary 30H20, 46E15, 47B10, 47B35 }

\begin{abstract}
We  characterize the membership  of Hankel Operators with general symbols in the Schatten Classes  $S^p,\, p\in(0,1),$
of the large Bergman spaces $A^2_{\omega}$. The case $p\geq 1$ was proved by Lin and Rochberg.
\end{abstract}

\maketitle

\section{Introduction}
Let $\D$ be the unit disc of the complex plane,
$H(\D)$ be the space of analytic functions in the disc and $dA(z)=\frac{dxdy}{\pi}$ be the normalized area Lebesque measure in $\mathbb D$.
We say that an $f\in H(\mathbb D)$ belongs to the Hilbert space $A^2_{\omega}$
if
\begin{equation}\label{norm}
\|f\|^2_{A^2_{\omega}}= \int_{\mathbb D}\, |f(z)|^2 \,\omega(z) \, dA(z) <\infty\,.
\end{equation}
The weight $\omega$ is positive, radial, integrable in the disc and belongs to  the class $\mathcal W$.
Among the members of $\mathcal W$ we can find those weights that introduce the  
typical large Bergman spaces
like the exponential
$$
\omega(z)= e^{-\frac{1}{(1-|z|)^\alpha}}\,, \quad \alpha >0\,,
$$
and  the double exponetial weights
$$
\omega(z)= e^{-\gamma e^{\frac{\beta}{(1-|z|)^\alpha}}}\,, \quad \alpha,\beta,\gamma >0\,.
$$
The class $\mathcal W$ was originally considered in \cite{BDK},\,\cite{PP}.
We will present all the necessary details in the next section.

Let  $L^2_{\omega}$  be the space of the measurable functions that satisfy (\ref{norm}).
If $P$ is the projection of $L^2_{\omega}$ onto $A^2_{\omega}$ then 
we call Hankel operator with symbol a function 
$g\in L^2_{\omega}$ the operator 
\begin{equation}\label{hankel}
H_g (f) = g f - P (gf) \,,
\end{equation}
defined on a dense subset of $ A^2_{\omega}$\,.
The action of Hankel operators on the standard weighted Bergman spaces $A^2_{\alpha}$, that is for 
$$
\omega(z)=(1-|z|)^{\alpha}\,,\quad\alpha>-1,
$$ 
has been extensively studied. We refer to 
\cite{A}, \cite{AFP}, \cite{FX}, 
\cite{Lu}, \cite{P},
\cite{X}, 
 \cite{Zh1}, \cite{Zh2} for more information. 
In \cite{LR1} Lin and Rochberg considered the action of Hankel operators  
on large Bergman spaces and they characterized the boundedness and the compactness.
In a following work, among other things, they dealt with the membership of $H_g$ in the Schatten classes  $S^p=S^p(A^2_{\omega})$ for $p\geq 1$. See \cite{LR2}.
The weights in \cite{LR1}, \cite{LR2} were originally introduced in \cite{O}, \cite{OP} and are slightly different from those of the class $\mathcal W$.
The action of Hankel operators with conjugate analytic symbols on $A^2_{\omega},\, \omega \in \mathcal W,$ was considered in \cite{GP}.

The members of $\mathcal W$ are radial weights. As a consequence, like in the case of the unweighted Bergman spaces,
 we can prove that the polynomials are dense in $A^2_{\omega}$.  Note that this does not necessarily hold for non radial weights.
See Remark $4.3$ in \cite{LR2}. Therefore, a Hankel operator on  $A^2_{\omega}$ with symbol $g \in L^2_{\omega}$ is the densely defined operator
$$
H_g(f)= g\,f \,-\,P(g\,f)\,, \quad \quad f\in H^{\infty}\,.
$$
Here $H^{\infty}$ is the algebra of  bounded analytic functions in $\mathbb D$. 

The objective of this work is the study of the Schatten classes.
As far as it concerns the boundedness of $H_g$  we refer to the proof  of Theorem $3.1$ in \cite{LR1}. Under minor modifications it serves in the case of $\omega \in \mathcal W$
as well. We state the characterization of boundedness postponing all the requirements for the next section. 
\begin{Theorem A}
Let $\omega \in \mathcal W$ and  $g\in L^2_{\omega}$. The following are equivalent\\
$(1)  \,\,\,\,H_g \in  B( A^2_{\omega},  L^2_{\omega})$\\
$(2)$ The function 
$$
 F^2_{\delta}(z)=\inf\left\{\frac{1}{|D(\delta \tau (z))|}\,\int_{ D(\delta \tau (z))}\, |g - h|^2 \, dA : h\,\, \text{analytic in }  D(\delta \tau (z))  \right\}
$$
is bounded, that is  $ \sup_{z \in \mathbb  D} F_{\delta}(z) < \infty  $
 for some  $\delta \in (0,m_{\tau})$.\\
 $(3)$ The symbol $g$ admits a decomposition  $g=g_1 + g_2$ where   $g_2 \in {\mathcal C}^1(\mathbb D)$ and 
 \begin{equation}\label{bar}
\frac{\bar{\partial}g_2}{(\Delta \phi)^{\frac 12}}\in L^{\infty}(\mathbb D)
\end{equation}
 while $g_1$ satisfies the following condition 
 \begin{equation}\label{CM}
\sup_{z \in \mathbb  D}\, \frac{1}{|D(\delta\tau(z))|}\int_{D(\delta\tau(z))}\,|g_1|^2\, dA\,<\infty
\end{equation}
for some $\delta \in (0,m_{\tau})$.
\end{Theorem A}
 
Let $\mathcal H$ be a Hilbert space and T be a bounded operator on $\mathcal H$. We say that $T$ belongs to the Schatten class $S^p$ of $\mathcal H$,\,$ p\in (0,\infty),$ if its 
 sequence of  singular numbers $\{s_\nu(T)\}_\nu \in l^p\,.$
The singular numbers of $T$ are defined as
$
s_\nu=s_\nu (T)=\inf\{ \|T-K\| : rank K \leq \nu \}.
$
Let $ \| T \|^{p}_{S^p} =\sum_{n} s_{\nu}^p\,. $ If $p\geq 1$ then it is a norm while for $p \in (0,1)$ it is true that
$
\| T + S \|^{p}_{S^p} \leq \| T \|^{p}_{S^p} + \| S \|^{p}_{S^p}\,.
$
We suggest \cite{GK}, \cite{Zh3} for more information on the theory of the Schatten Ideals. 
Although we work with $\omega\in\mathcal W$ the membership of Hankel operators in $S^p$ of $A^2_{\omega}$ for $p\geq 1$ is actually characterized by Theorem $4.4'$ \cite{LR2}. We deal with the open case $p\in(0,1).$
\begin{theorem}\label{main}
Let $\omega \in \mathcal W$ and  $g\in L^2_{\omega}$. Assume that 
$H_g$ is bounded in the  $L^2_{\omega}$ norm. The following are equivalent\\
$(1)  \,\,\,\,H_g \in S^p,\,\quad p\in (0,1)$\\
$(2)$ There exists a $\delta \in (0,m_{\tau})$  such that for every $\tau- covering$  $\{ D(\delta \tau(z_j))\}_j$  of the unit disc 
$$ \sum_j F_{\delta}(z_j)^p < \infty   $$
$(3)$  There exists a $\delta \in (0,m_{\tau})$ and a decomposition  $g=g_1 + g_2$ 
such that for every $\tau-$ covering  $\{ D(\delta \tau(z_j))\}_j$  of $\mathbb D$
$$
\sum_j \,\left( \frac{1}{|D(\delta\tau(z_j))|}\int_{D(\delta\tau(z_j))}\,|g_1|^2\, dA(z)\right)^{p/2}\,<\infty
$$
while $g_2 \in {\mathcal C}^1(\mathbb D)$ and the same holds with 
$
\frac{\bar{\partial}g_2}{(\Delta \phi)^{\frac 12}}
$
in place of $g_1$.
\end{theorem}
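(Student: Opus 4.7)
The plan is to establish the chain $(2)\Rightarrow(3)\Rightarrow(1)\Rightarrow(2)$. The equivalence $(2)\Leftrightarrow(3)$ is a $\bar\partial$-patching argument which upgrades the $\ell^{\infty}$ estimates behind Theorem~A to their $\ell^p$ counterparts, while the genuinely new step is the Schatten equivalence in the subcritical range $p\in(0,1)$. The classical $p\ge 1$ reduction $H_g\in S^p\Leftrightarrow T_{|g|^2}\in S^{p/2}$ is unavailable here, so I must instead discretise the symbol and apply the $p$-triangle inequality $\|S+T\|^{p}_{S^p}\le\|S\|^{p}_{S^p}+\|T\|^{p}_{S^p}$ repeatedly.

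\textbf{Step 1: the patching $(2)\Leftrightarrow(3)$.} Fix a $\tau$-covering $\{D(\delta\tau(z_j))\}$ together with a slightly enlarged one supporting a smooth partition of unity $\{\chi_j\}$ satisfying $|\bar\partial\chi_j|\lesssim\tau(z_j)^{-1}$. Choose $h_j$ analytic on the enlarged disc nearly attaining $F_\delta(z_j)$, and set $g_2=\sum_j\chi_jh_j$, $g_1=g-g_2$. Finite overlap yields on each $D_l$ the pointwise bound $|g_1|^2\lesssim\sum_{k\sim l}\chi_k|g-h_k|^2$, giving at once the summability for the $g_1$-sum from $\sum_j F_\delta(z_j)^p<\infty$. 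For the analytic part I would exploit $\sum_j\bar\partial\chi_j=0$ to write, for $z\in D_l$,
\begin{equation*}
\bar\partial g_2(z)=\sum_{k\sim l}\bar\partial\chi_k(z)\bigl(h_k(z)-h_l(z)\bigr),
\end{equation*}
and control the differences $h_k-h_l$ on the overlaps via subharmonicity of $|h_k-h_l|^2$ combined with $|h_k-h_l|\le|g-h_k|+|g-h_l|$. The factor $\tau(z_l)^{-1}$ from $\bar\partial\chi_k$ is exactly absorbed by $(\Delta\phi)^{-1/2}\asymp\tau$, producing the bound on $\bar\partial g_2/(\Delta\phi)^{1/2}$. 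The converse follows from a local $\bar\partial$-solvability (Poincar\'e) estimate on each $D_j$ producing an analytic $h_j$ with $\int_{D_j}|g_2-h_j|^2\,dA\lesssim\tau(z_j)^2\int_{D_j}|\bar\partial g_2|^2\,dA$, so that $F_\delta(z_j)^2$ splits into the two mean-square quantities of (3).

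\textbf{Step 2: the Schatten upper bound $(3)\Rightarrow(1)$.} Split $H_g=H_{g_1}+H_{g_2}$. Decompose $g_1=\sum_j\psi_jg_1$ via a partition of unity subordinate to the $\tau$-covering; the $p$-triangle inequality reduces matters to the localised bound
\begin{equation*}
\|H_{\psi_jg_1}\|^p_{S^p}\lesssim\Bigl(\frac{1}{|D_j|}\int_{D_j}|g_1|^2\,dA\Bigr)^{p/2}.
\end{equation*}
Since $\psi_jg_1$ is supported near $z_j$, approximating $P$ by the projection onto a finite-dimensional space of analytic polynomials adapted to $z_j$ makes $H_{\psi_jg_1}$ essentially finite-rank; combining the estimate $\|T\|^p_{S^p}\le(\mathrm{rank}\,T)^{1-p/2}\|T\|^p_{HS}$ with a direct Hilbert--Schmidt calculation based on the reproducing kernel estimates of \cite{LR2} yields the displayed inequality. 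The $g_2$-part is treated via the integral representation $H_{g_2}f=\int K_\omega(\cdot,w)f(w)\bar\partial g_2(w)\,dA(w)$, with $K_\omega$ a $\bar\partial$-solution kernel adapted to $\omega$; the same localisation/finite-rank argument delivers the analogous bound with $\bar\partial g_2/(\Delta\phi)^{1/2}$ in place of $g_1$.

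\textbf{Step 3: the test-function implication $(1)\Rightarrow(2)$ and the main obstacle.} Take a sufficiently separated $\tau$-lattice $\{z_j\}$; by the off-diagonal decay of the reproducing kernel in the $\mathcal W$ setting the normalised kernels $\hat k_{z_j}=K_{z_j}/\|K_{z_j}\|$ form an almost orthonormal system. For each $j$ I would construct a unit vector $w_j\in L^2_\omega\ominus A^2_\omega$ concentrated on $D(\delta\tau(z_j))$ with $|\langle H_g\hat k_{z_j},w_j\rangle|\gtrsim F_\delta(z_j)$, exploiting that the oscillation of $g$ on $D(\delta\tau(z_j))$ is precisely what $H_g$ detects on test functions nearly constant there. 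After a Gram--Schmidt clean-up one may apply the inequality
\begin{equation*}
\sum_j|\langle Te_j,f_j\rangle|^p\le C_p\,\|T\|^p_{S^p},\quad 0<p\le 1,
\end{equation*}
to $T=H_g$, $e_j=\hat k_{z_j}$, $f_j=w_j$, closing the loop. The principal technical obstacle throughout is the lack of a clean atomic decomposition of $L^2_\omega$ modulo analytics in the $\mathcal W$ setting: both the finite-rank reduction in Step~2 and the almost-orthogonality in Step~3 rely on sharp pointwise kernel estimates and on carefully tracking the balance between the $\tau$ and $(\Delta\phi)^{1/2}$ factors, which is where the heart of the argument lies.
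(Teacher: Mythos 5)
Your Step 3 --- the implication $(1)\Rightarrow(2)$, which is precisely the new content of the theorem --- rests on the inequality
\begin{equation*}
\sum_j|\langle Te_j,f_j\rangle|^p\le C_p\,\|T\|^p_{S^p},\qquad 0<p\le 1,
\end{equation*}
for orthonormal systems $\{e_j\}$, $\{f_j\}$. This inequality is \emph{false} for $0<p<1$; it is only valid for $p\ge 1$ (and that is exactly why Lin and Rochberg's argument covers $p\ge 1$ but not the range treated here). A rank-one counterexample: let $T=\langle\cdot,x\rangle y$ with $\|x\|=\|y\|=1$, so $\|T\|_{S^p}=1$ for every $p$, and choose orthonormal $e_1,\dots,e_N$ and $f_1,\dots,f_N$ with $x=N^{-1/2}\sum_je_j$ and $y=N^{-1/2}\sum_jf_j$; then $\sum_{j=1}^N|\langle Te_j,f_j\rangle|^p=N^{1-p}\to\infty$. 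No Gram--Schmidt clean-up of almost-orthonormal kernels can repair this, since the inequality already fails for genuinely orthonormal systems. The paper's route is structurally different: it sandwiches $H_g$ as $BH_gA$ where $A$ maps an orthonormal set $\{e_j\}$ to normalized test functions $\kappa_j=F_{z_j,n}/\|F_{z_j,n}\|$ (the special functions of Lemma C with decay exponent $3n$, $n>\frac{2p+4}{3p}$, not reproducing kernels) and $B$ is built from the normalized local residuals $h_j=(\chi_{D_{j,\delta}}g\kappa_j-P_{j,\delta}(g\kappa_j))/\|\cdot\|$. Boundedness of $A$ (Proposition 2) gives $\|BH_gA\|^p_{S^p}\lesssim\sup_jc_j^p$; the diagonal part of $BH_gA$ dominates $\sum_jc_j^pF^p_\delta(z_j)$, and the off-diagonal part is absorbed because the lattice separation $d_\tau(z_i,z_j)\ge 2^k\delta$ makes its $S^p$ quasi-norm smaller by a factor $2^{-3npk/2}$. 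Taking $k$ large and then dualizing $\ell^1$ against $\ell^\infty$ in the weights $c_j^p$ yields $(2)$. Your proposal contains no mechanism for controlling the off-diagonal interaction, which is where the actual difficulty of the $p<1$ case lives.

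Two smaller points. In Step 2 the claim that $H_{\psi_jg_1}$ becomes ``essentially finite-rank'' after truncating $P$ is not substantiated and is not how the bound is obtained; the paper instead dominates $\|H_{g_1}f\|$ by $\|M_{g_1}f\|$, passes to $M_\psi^*M_\psi=T_{|\psi|^2}$, and invokes the Schatten-class characterization of Toeplitz operators on large Bergman spaces (Theorem 4.6 of \cite{APP}) for $S^{p/2}$ with $p/2<1/2$ --- you would need that result (or reprove it) in any case. Your Step 1 is in the spirit of the Lin--Rochberg decomposition that the paper cites for $(2)\Rightarrow(3)$ and is essentially fine, though the paper closes the cycle as $(3)\Rightarrow(1)\Rightarrow(2)$ rather than proving $(3)\Rightarrow(2)$ directly.
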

The implications $(2) \Rightarrow (3)$  and $(3) \Rightarrow (1)$ turn out only
by applying the corresponding steps of the proof of Theorem $4.4'$.  We 
present the arguments for the sake of completeness. Our contribution is the proof of $(1) \Rightarrow (2).$ 
Throughout the paper $A\lesssim B $ means that there is a constant $C$ independent
of the relevant variables such that $A \leq C B.$  We write $A \asymp B$ if both $A \lesssim B$ and $B \lesssim A $. We follow the notation introduced by Lin and Rochberg in \cite{LR1}, \cite{LR2}.

\section{Preliminaries}
\subsection{The class $\mathcal W$} We will follow the notation introduced in \cite{PP}, \cite{APP}.
We say that a positive function $\tau$ in $\mathbb D$ belongs to the class $\mathcal L$ if
\begin{equation}\label{c1}
\exists\,\,  c_1>0 \,\,\,\, \text{such that} \,\,\,\, \tau(z) \leq \,c_1  (1-|z|)\,,\,\, \forall\,\,z \in \mathbb D \\
\end{equation}
\begin{equation}\label{c2}
 \exists\,\,  c_2>0 \,\,\,\, \text{such that}\,\, \,\,|\tau(z)-\tau(\zeta)|\leq\, c_2\, |z - \zeta|\,,\,\, \forall\,\,z ,\zeta \in \mathbb D \,.
\end{equation}
Let $\delta > 0$ and $z\in \mathbb D$. We denote by $D(\delta \tau (z))$ the Euclidean disk with center the point $z$ and radius $\delta\tau (z)$.
According to Lemma 2.1 in \cite{PP} if we choose a $\delta \in (0, m_{\tau}),$
 where $m_{\tau} = \frac{min(1,c^{-1}_{1},c^{-1}_{2})}{4},$
   and a $z \in \mathbb D$
 then for any  $\zeta\in \overline{D(\delta\tau(z))}$  
\begin{equation}\label{comparison tau}
\frac 12 \tau(z) \leq \tau(\zeta) \leq 2 \tau(z)
\end{equation}
In \cite{O} it is established the following covering lemma for the unit disk.
\begin{Lemma A}
If $\tau \in {\mathcal L}$ and $\delta \in (0,m_{\tau})$  then there exists a sequence of points $\{ z_j \}_j \subset \mathbb D,$
such that the following conditions are satisfied\\
$(1)$ $z_j \notin D(\delta\tau(z_k)),\quad j\neq k$\\
$(2)$ $\cup_j  D(\delta\tau(z_j))= \mathbb D $\\
$(3)$ $D(\delta\tau(z_j))=\cup_{z\in D(\delta\tau(z_j))}\, D(\delta\tau(z))\subset D(3\delta\tau(z)),\quad j=1,2,...$\\
$(4)$ $ \left\{  D(3\delta\tau(z_j))\right\}$ is a covering of $\mathbb D$ of finite multiplicity $N$, which is independent of $\delta$.
\end{Lemma A}
This collection $\{  D(\delta\tau(z_j))\}_j$ is called a $\tau$-covering of $\mathbb D$ and the sequence of points $\{ z_j\}_j$ a $(\delta,\tau)$-lattice of $\mathbb D$.

\begin{definition}
 We say that a positive, integrable function
$
\omega
$
 in $\mathbb D$
belongs to the class $\mathcal W$  if 
$$
 \omega (z)=e^{-2\phi (z)}\,, \quad z\in \mathbb D,
$$ 
where $\phi $ is a radial function such that 
$$ \phi \in C^2(\mathbb D)\quad \text{and }\quad(\Delta \phi(z))^{-\frac 12} \asymp \tau(z) $$
where the function $\tau$  is a positive radial function that decreases to $0$ as $|z|\to 1^{-}$ and $\lim_{r \to 1^{-}} \tau'(r) =0$. 
Here $\Delta $ stands for the Laplace operator.
We also supose that either there exist a constant $C>0$ such that $\tau (r)(1-r)^C$
increases for $r$ close to $1$ or $\lim_{r \to 1} \tau'(r) \log\frac{1}{\tau(r)}=0.$
\end{definition}

It is straightforward to see that if $\omega \in \mathcal W $ then $\tau \in \mathcal L$.  
Moreover,  any  $(\delta, \tau)-$lattice $\{ z_j\}_j$ can be partitioned into a finite number of subsequences such that any two distinct points of the same subsequence  are sufficiently far one from the other. To be more specific, let 
 \begin{equation}\label{distance}
 d_{\tau}(z,\zeta)=\frac {|z-\zeta|}{min(\tau(z),\tau(\zeta))}\,,\quad z,\zeta\in \mathbb D\,.
 \end{equation}
 In \cite{APP} Lemma $4.5$  the authors prove the following argument.
 \begin{Lemma B}
 If $\omega \in \mathcal W$, $\delta \in (0,m_{\tau})$ and $k$ is a positive integer then any $(\delta, \tau)-$lattice $\{ z_j\}_j $ of $\mathbb D$
can be partitioned into $M$ subsequences such that, if $z_j$ and $z_i$ are different points in the same subsequence then
 $d_{\tau}(z_i, z_k)\geq 2^k\delta\,.$
  \end{Lemma B}
\subsection{Bergman spaces and test functions} 
The space $A^2_{\omega}$, $\omega \in \mathcal W ,$  is a  Hilbert space
with inner product 
$$
<f,g>=\int_{\mathbb D} f(z)\,\overline{g(z)}\, \omega(z)\,dA(z)\,.
$$
In Lemma $2.2$ \cite{PP} it is proved that the point evaluation functionals $L_z$ at a $z\in \mathbb D$ are bounded. Therefore, there are reproducing kernel functions $K_z \in A^2_{\omega}$
with 
$\|L_z\|=\|K_z\|_{A^2_{\omega}}$ 
such that 
$
L_z(f)=<f, K_z>=f(z)\,.
$
In many cases the normalized reproducing kernels are used as test functions.
Here, we are interested in the following case
\begin{Lemma C}
Let $n\in \mathbb N \backslash\{0\}$ and $\omega \in \mathcal W$. There is a number $\rho_0\in (0,1)$
 such that for each $\alpha \in \mathbb D$
 with $|\alpha|\geq\rho_0$  there is a function  $F_{\alpha, n} \in H^{\infty}$ 
 such that,  uniformly in $\alpha$,
 \begin{equation}\label{test function1}
 |F_{\alpha, n}(z)|^{2}\,\omega(z) \asymp 1\,,\quad \text{if } \quad |z-\alpha|<\tau(\alpha)
 \end{equation}
 and 
 \begin{equation}\label{test function2}
 |F_{\alpha, n}(z)|\,\omega(z)^{\frac 12} \lesssim \,\min  \left( 1, \frac{\min (\tau(\alpha), \tau(z))}{|z-\alpha|} \right)^{3n}\,,\quad  z\in \mathbb D\,.
 \end{equation}
 Moreover,  it is true that
 \begin{equation}\label{norm test}
 \|F_{\alpha, n}\|_{A^{2}_\omega} \asymp \tau(\alpha)\,\quad \rho_0\leq|\alpha|<1\,.
  \end{equation}
 \end{Lemma C}
For the case $n=1$ see Lemma $3.3$ \cite{BDK} and for the other values of $n$ look at Lemma $3.1$ \cite{PP}.
 
 \subsection{Local Bergman Spaces} Let $\alpha \in \mathbb D, \delta > 0$. Consider the space
$L^2(D(\delta\,\tau(\alpha)),\omega\,dA)\doteqdot L^2_{\omega}(D(\delta\,\tau(\alpha))),$ 
the closed subspace of analytic functions $A^2_{\omega}(D(\delta\,\tau(\alpha)))$
and the projection $P_{\alpha, \delta}$ of $L^2_{\omega}(D(\delta\,\tau(\alpha)))$ onto $A^2_{\omega}(D(\delta\,\tau(\alpha)))$.
Given an $f \in L^2_{\omega}(D(\delta\,\tau(\alpha)))$ we extend $P_{\alpha, \delta}(f)$ to $\mathbb D$ by setting 
$
P_{\alpha, \delta}(f)|_{\mathbb D \setminus D(\delta\,\tau(\alpha) )}=0 \,.
$
It is true that 
$
P^2_{\alpha, \delta}(f)=P_{\alpha, \delta}(f)$ and $ <f,P_{\alpha, \delta}(g)>=<P_{\alpha, \delta}(f),g>$ for
$f,g \in L^2_{\omega}.
$
Moreover, if $ G \in A^2_{\omega}$ then 
$$
P_{\alpha,\delta}(G)=\chi_{D(\delta\tau(\alpha))}\,G
\quad \text{and} \quad <G\,,\,\chi_{D(\delta\tau(\alpha))}\,g-P_{\alpha,\delta}(g) >=0\,.$$
It is true that 
 \begin{lemma}\label{local bergman}
 	Let $f,g \in L^2_{\omega} $ then 
 	$$
 	<f-P(f),\, \chi_{D(\delta\,\tau(\alpha))}g - P_{\alpha, \delta}(g)> = <\chi_{D(\delta\,\tau(\alpha))}f-P_{\alpha, \delta}(f),\, \chi_{D(\delta\,\tau(\alpha))}g - P_{\alpha, \delta}(g)  >
 	$$
 \end{lemma}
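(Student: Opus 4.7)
The plan is to show that both sides differ by zero by reducing the identity to a statement about orthogonality inside the local Bergman space $A^2_{\omega}(D(\delta\tau(\alpha)))$. Set $D=D(\delta\tau(\alpha))$ for brevity. Since $P_{\alpha,\delta}(g)$ is extended by zero outside $D$, the function
$$
\chi_{D}\,g - P_{\alpha,\delta}(g)
$$
is supported in $D$. Consequently the inner products on both sides reduce to integrals over $D$, and their difference can be written as
$$
\bigl\langle\,(f-P(f))-(\chi_{D}f-P_{\alpha,\delta}(f)),\ \chi_{D}g-P_{\alpha,\delta}(g)\,\bigr\rangle.
$$

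Next I would observe that on $D$ we have $\chi_{D}f=f$, so the first argument, restricted to $D$, collapses to
$$
P_{\alpha,\delta}(f) - P(f)\big|_{D}.
$$
The function $P(f)$ belongs to $A^2_{\omega}(\mathbb D)$, so it is analytic on all of $\mathbb D$ and, in particular, its restriction to $D$ lies in $A^2_{\omega}(D)$; the function $P_{\alpha,\delta}(f)$ lies in $A^2_{\omega}(D)$ by construction. Hence $P_{\alpha,\delta}(f)-P(f)\big|_{D}\in A^2_{\omega}(D)$.

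Finally, I would invoke the defining property of the local projection $P_{\alpha,\delta}$ recalled just above the lemma, namely that $\chi_{D}g-P_{\alpha,\delta}(g)$ is orthogonal in $L^2_{\omega}(D)$ to every element of $A^2_{\omega}(D)$. Pairing this with $P_{\alpha,\delta}(f)-P(f)\big|_{D}\in A^2_{\omega}(D)$ gives that the displayed difference vanishes, which yields the desired identity. There is no real obstacle here; the only step that deserves a line of justification is that $P(f)$ is genuinely analytic on $D$ and square-integrable there against $\omega$, so that its restriction qualifies as an element of $A^2_{\omega}(D)$ against which the local orthogonality of $\chi_{D}g-P_{\alpha,\delta}(g)$ can be applied.
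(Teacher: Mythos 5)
Your argument is correct and rests on the same two facts the paper uses: that $\chi_{D}g-P_{\alpha,\delta}(g)$ is supported in $D$ and is orthogonal to everything analytic and square-integrable on $D$ (which covers both $P(f)|_{D}$ and $P_{\alpha,\delta}(f)$). The paper merely organizes this as a chain of three equalities, inserting and removing $P(f)$ and $P_{\alpha,\delta}(f)$ one at a time, whereas you subtract the two sides and apply the orthogonality once; the content is identical.
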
	
 \begin{proof}
 \begin{align*}
& <f-P(f),\, \chi_{D(\delta\,\tau(\alpha))}g - P_{\alpha, \delta}(g)> 
 =<f,\, \chi_{D(\delta\,\tau(\alpha))}g - P_{\alpha, \delta}(g)> \\
 = &< f-P_{\alpha, \delta}(f),\, \chi_{D(\delta\,\tau(\alpha))}g - P_{\alpha, \delta}(g)>
 = < \chi_{D(\delta\,\tau(\alpha))}\,f-P_{\alpha, \delta}(f),\, \chi_{D(\delta\,\tau(\alpha))}g - P_{\alpha, \delta}(g)>\,.
 \end{align*}
 \end{proof}
The following is based on the ideas of Proposition $2$ \cite{PP}.
\begin{proposition}\label{operatorA}
Let $\omega \in \mathcal W,\, \delta\in (0,m_{\tau}).$ Consider the lattice
from Lemma $A$, an orthonormal sequence  $\{e_j\}_j$ of $A^2_{\omega}$ and an $n\in \mathbb N$ such that $n\geq 2$. Let
 $\rho_0$ be the number given in Lemma C and $\{z_j\}_j $ be an enumeration of the points of the lattice with the property $|z_j|\geq \rho_0$. 
 If $J$ is any finite subcollection then the operator  
$$A(f)=\sum_{j\in J}\, <f,e_j>\kappa_j \,, \quad \kappa_j(z)=\frac{F_{z_j, n}(z)}{\|F_{z_j, n} \|_{A^{2}_{\omega}}}$$
 is bounded in $A^{2}_{\omega}.$ 
\end{proposition}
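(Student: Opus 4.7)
The plan is to reduce the boundedness of $A$ on $A^2_\omega$ to a Schur test on the Gram-type matrix of the normalized test functions $\kappa_j$. Computing
$$
\|Af\|_{A^2_\omega}^2 \;=\; \sum_{j,k\in J} \langle f,e_j\rangle\,\overline{\langle f,e_k\rangle}\,\langle \kappa_j,\kappa_k\rangle ,
$$
and denoting by $M$ the Hermitian matrix with entries $M_{jk}=\langle\kappa_j,\kappa_k\rangle$, $j,k\in J$, Bessel's inequality gives $\|\{\langle f,e_j\rangle\}\|_{\ell^2}\le \|f\|_{A^2_\omega}$, hence $\|Af\|_{A^2_\omega}^2\le \|M\|_{\ell^2\to\ell^2}\,\|f\|_{A^2_\omega}^2$. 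It therefore suffices to show $\|M\|_{\ell^2\to\ell^2}\le C$ with $C$ independent of the finite set $J$, and Schur's test with constant weight $h_j\equiv 1$ reduces this in turn to the single estimate
$$
\sup_{j\in J}\, \sum_{k\in J} |\langle \kappa_j,\kappa_k\rangle| \;\le\; C .
$$

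To estimate an individual inner product I would combine the pointwise bound \eqref{test function2} for $F_{z_j,n}$ with the norm equivalence \eqref{norm test}, obtaining
$$
|\kappa_j(z)|\,\omega(z)^{1/2} \;\lesssim\; \frac{1}{\tau(z_j)}\,
\min\!\Bigl(1,\;\tfrac{\min(\tau(z_j),\tau(z))}{|z-z_j|}\Bigr)^{3n}.
$$
Inserting this into $|\langle\kappa_j,\kappa_k\rangle|\le \int_{\D}|\kappa_j|\,|\kappa_k|\,\omega\,dA$ and splitting the integration over the regions $\{|z-z_j|\le|z_j-z_k|/2\}$ and $\{|z-z_k|\le|z_j-z_k|/2\}$ (with an essentially negligible contribution from the set where both factors already decay) yields, after using that $\tau$ varies slowly on discs of comparable radius and that $n\ge 2$ makes the relevant area integral finite, a decay estimate of the form
$$
|\langle \kappa_j,\kappa_k\rangle| \;\lesssim\; \Bigl(\tfrac{\min(\tau(z_j),\tau(z_k))}{\tau(z_j)+\tau(z_k)+|z_j-z_k|}\Bigr)^{3n-2},
$$
in the spirit of Proposition~$2$ of \cite{PP}.

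To carry out the summation over $k$ I would then invoke Lemma~B and partition the sequence $\{z_k\}$ into $M$ subsequences in which any two distinct points are $d_\tau$-separated by at least $2^m\delta$, with $m$ chosen large. On each subsequence a standard annular counting argument that exploits the essential disjointness of the discs $D(\delta\tau(z_k))$ and the comparison \eqref{comparison tau} shows that the number of lattice points with $d_\tau(z_j,z_k)\in[2^s,2^{s+1})$ is uniformly bounded in $s$; since $3n-2>2$ for $n\ge 2$, the resulting geometric series in $s$ converges and is bounded independently of $j$. Summing over the $M$ subsequences completes the Schur estimate and so yields the desired bound on $\|A\|$.

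The key difficulty, and the main obstacle I anticipate, is the off-diagonal decay estimate for $|\langle\kappa_j,\kappa_k\rangle|$ combined with the lattice counting. Both depend crucially on the slow variation of $\tau\in\mathcal L$, on Lemma~B for uniform separation within each subsequence, and on the choice $n\ge 2$, which is precisely what forces the decay exponent $3n-2$ to exceed the area growth rate and hence makes the geometric series summable; the rest of the argument is a clean application of the Schur test.
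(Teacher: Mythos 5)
Your proposal is correct in substance but organizes the argument differently from the paper. The paper never forms the Gram matrix: it applies Cauchy--Schwarz pointwise inside the integral, splitting $|\langle f,e_j\rangle|\,|\kappa_j(z)|$ into the product of $|\langle f,e_j\rangle|\,|F_{z_j,n}(z)|^{(n-1)/n}/\tau^2(z_j)$ and $\tau(z_j)|F_{z_j,n}(z)|^{1/n}$, proves the pointwise bound $\sum_j\tau^2(z_j)|F_{z_j,n}(z)|^{2/n}\lesssim\tau^2(z)\,\omega(z)^{-1/n}$, then integrates each remaining term to get $\lesssim\tau^4(z_j)$ and finishes with Bessel's inequality. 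That is a weighted Schur test carried out at the level of the integral, and it avoids ever estimating the cross inner products $\langle\kappa_j,\kappa_k\rangle$. Your route --- a discrete Schur test on the Gram matrix with constant weights --- buys a cleaner reduction, namely $\sup_j\sum_k|\langle\kappa_j,\kappa_k\rangle|\le C$, at the price of having to prove genuine off-diagonal decay, which is the more delicate of the two computations. Both arguments consume the same inputs ((\ref{test function2}), the Lipschitz property (\ref{c2}), dyadic annular counting) and both need $n\ge 2$ for the same reason: the decay exponent must beat the quadratic area growth of the annuli.

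Two points in your sketch need repair before it closes. First, the claim that the number of lattice points with $d_\tau(z_j,z_k)\in[2^s,2^{s+1})$ is ``uniformly bounded in $s$'' is false; what the essential disjointness of the discs $D(\delta\tau(z_k))$ actually gives is $\sum_{z_k}\tau^2(z_k)\lesssim 2^{2s}\tau^2(z_j)$ over that annulus, so the count grows like $2^{2s}$. Your own condition $3n-2>2$ is precisely what compensates for this quadratic growth, so the conclusion survives, but as written the two statements are inconsistent (a uniformly bounded count would only require a positive decay exponent). Second, in the off-diagonal estimate the regime $\tau(z_k)\gg\tau(z_j)$ requires care: bounding $\min(\tau(z_k),\tau(z))$ crudely by $\tau(z_k)$ on the region near $z_j$ loses a factor $(\tau(z_k)/\tau(z_j))^{O(n)}$, and one must instead use $\min(\tau(z_k),\tau(z))\le\tau(z)\lesssim\tau(z_j)+c_2|z-z_j|$ there, together with the lattice separation $|z_j-z_k|\ge\delta\max(\tau(z_j),\tau(z_k))$, to recover a decay exponent strictly larger than $2$ (one gets at least $3n/2$, which suffices for $n\ge2$ even if the stated exponent $3n-2$ is not reached everywhere). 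With these repairs the Schur-test argument is a valid alternative to the paper's proof.
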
	
\begin{proof}
	Observe that
	 $$	\|A(f)\|^2_{A^{2}_{\omega}}
		\leq \int_{\mathbb D}\, \left(\sum_{|z_j|\geq \rho_0}\, |<f,e_j>||\kappa_j(z)| \right)^2 \omega(z)dA(z)\,.$$
		 We write
		 $\sum_{|z_j|\geq \rho_0} |<f,e_j>||\kappa_j(z)|=\sum_{j} |<f,e_j>||\kappa_j(z)|\,.$
Applying  Cauchy Schwartz 
\begin{align*}
	\|A(f)\|^2_{A^{2}_{\omega}}
	& \lesssim  \int_{\mathbb D}\, \left(\sum_{j}\,| <f,e_j>|^2 \frac{|F_{z_j,n}(z)|^{2\frac{n-1}{n}}}{\tau^4(z_j)}\right)
	\,\left(\sum_{j}\,\tau^2(z_j) |F_{z_j,n}(z)|^{\frac{2}{n}}\right)\, \omega(z) \, dA(z)\,.
\end{align*}
We estimate the second factor.  
Let $z\in \mathbb D$ then employing (\ref{test function1}), (\ref{comparison tau}) and 
$(iv)$ of Lemma A
\begin{align}\label{first additive}
\sum_{\{z_j\in D(\delta_0\tau(z))\}}\,\tau^2(z_j) |F_{z_j,n}(z)|^{\frac{2}{n}}
\lesssim \frac{\tau^2(z)}{\omega^{\frac 1n}(z)}\,\,.
\end{align}
On the other hand, applying (\ref{test function2})
\begin{align*}
 \sum_{\{z_j\notin D(\delta_0\tau(z))\}}\,\tau^2(z_j) |F_{z_j,n}(z)|^{\frac{2}{n}}
 &\lesssim \frac{\tau^6(z)}{\omega^{\frac 1n}(z)}\sum_{i\geq 0}
 \sum_{z_j \in R_i(z)}\frac{\tau^{2}(z_j)}{|z-z_j|^6}\\
 &\lesssim \,\frac{1}{\omega^{\frac 1n}(z)}
 \,\sum_{i\geq 0} 2^{-6i}
 \sum_{z_j \in R_i(z)}\tau^{2}(z_j)
 \end{align*}
where $R_i(z)=\{\zeta\in \mathbb D : 2^i \delta_0 \tau(z) \leq |\zeta -z| < 2^{i+1} \delta_0 \tau(z) \},\, i=0,1,2...$
Due to (\ref{c2}), 
$$D(\delta_0\tau(z_j))\subset D(5\delta_0 2^{i}\tau(z))\,,\quad  \text{if} \,\, z_j\in D(\delta_0\, 2^{i+1}\tau(z)) $$ so
$$ \sum_{z_j \in R_i(z)}\tau^{2}(z_j) \lesssim A(D(z, 5\delta_0 2^{i}\tau(z)))\lesssim 2^{2i}\tau^2(z)\,.$$
This together with the finite multiplicity of the covering imply that
\begin{align}\label{second additive}
	\sum_{\{z_j\notin D(\delta_0\tau(z))\}}\,\tau^2(z_j) |F_{z_j,n}(z)|^{\frac{2}{n}}
&\lesssim \,\frac{\tau^2(z)}{\omega^{\frac 1n}(z)}\,.	
\end{align}
Combining (\ref{first additive}), (\ref{second additive}) we get that
\begin{align*}
	\sum_{j}\,\tau^2(z_j) |F_{z_j,n}(z)|^{\frac{2}{n}} \lesssim \frac{\tau^2(z)}{\omega^{\frac 1n}(z)}\,, \quad z\in \mathbb D.
\end{align*}
 Consequently,  
\begin{align}\label{operator A}
	\|A(f)\|^2_{A^{2}_{\omega}}\,
	& \lesssim
	\sum_{j}\,\frac{| <f,e_j>|^2}{\tau^4(z_j)}\,\int_{\mathbb D}\, |F_{z_j,n}(z)|^{2\frac{n-1}{n}}  \tau^2(z)\,\omega^{\frac{n-1}{n}}(z)\,dA(z)\,.
\end{align}
Let an index $j$ and consider the decomposition 
\begin{align*}
	\int_{\mathbb D}\, |F_{z_j,n}(z)|^{2\frac{n-1}{n}}  \tau^2(z)\,\omega^{\frac{n-1}{n}}(z)\,dA(z)=\int_{\{|z-z_j|<\tau(z_j)\}}\,\,\, + \int_{\{|z-z_j|\geq \tau(z_j)\}}= I_1 + I_2 \,.
\end{align*}
It follows from  (\ref{test function1}) that
\begin{align}\label{additive A}
I_1=\int_{\{|z-z_j|<\tau(z_j)\}}\, |F_{z_j,n}(z)|^{2\frac{n-1}{n}}  \tau^2(z)\,\omega^{\frac{n-1}{n}}(z)\,dA(z) \asymp  \tau^2(z_j)\,.
\end{align}
About the other additive, property (\ref{c2}) implies that $\tau(z)\lesssim 2^i \tau(z_j)$ when $|z-z_j|<2^i \tau(z_j).$
Since $n\geq 2,$ bearing in mind (\ref{test function2}),
\begin{align*}&I_2=\int_{\{|z-z_j|\geq \tau(z_j)\}}|F_{z_j,n}(z)|^{2\frac{n-1}{n}}\,  \tau^2(z)\,\omega^{\frac{n-1}{n}}(z)\,dA(z)\\
&\lesssim \tau^{6(n-1)}(z_j) \sum_{i\geq 0}\int_{\{2^{i}\tau(z_j)\leq |z-z_j|< 2^{i+1}\tau(z_j)\}}\,\frac{\tau^2(z)}{|z-z_j|^{6(n-1)}}\,dA(z) 
&\lesssim \tau^{4}(z_j)\,.
\end{align*}
The latter estimate together with (\ref{additive A}), (\ref{operator A}) and Bessel's inequality give	
\begin{align*}\label{boundedA}
	\|A(f)\|^2_{A^{2}_{\omega}}\, &  \lesssim  \sum_{j}\,\frac{\,| <f,e_j>|^2}{\tau^4(z_j)}\,\tau^4(z_j)
\, \lesssim  \|f\|^2_{A^2_{\omega}}\,, \quad f\in A^{2}_{\omega}\,.
\end{align*}
\end{proof}
\subsection{Schatten class properties} 
Recall that the membership of a bounded operator $T$ in the Schatten class $S^p$ of a Hilbert space depends on the $p-$summability of its singular numbers $\{s_\nu \}_{\nu}$. However, the technique we follow in the main proof needs more than that. Below we list  the arguments of the Schatten class theory that serve as building blocks. The statements are presented under certain modifications compared to how they are presented in \cite{FX} taking into account the fact that the singular values of $|T|^p\doteqdot (T^*T)^{\frac p2}$ are the $\{ s_{\nu}^p\}_{\nu}\,.$ In other words $\|T \|^p_{S^p} = \||T|^p\|_ {S^1}\,.
$ The same Lemmas were used in \cite{FX} for the case of the classical weighted Bergman spaces. 
\begin{Lemma D}
Let $p\in (0,\infty).$ Then for every pair of finite-rank operators $A$ and $B$ 
$$
\sum_{\nu \geq 1}\,(s_{\nu} (AB))^p \leq 2 \,\sum_{\nu \geq 1}\,(s_{\nu}(A))^p\,\sum_{\nu \geq 1}\,(s_{\nu} (B))^p\,\,.
$$
\end{Lemma D}	
	\begin{proof}
		See Lemma $6.7$ \cite{FX}\,.
	\end{proof}
For the product of two bounded operators we know that 
\begin{Lemma E}
	Let $A$ and $B$ be two bounded operators. Then 
	$$
	\|AB\|^p_{S^p} \leq \|B\|^p\, \|A\|^p_{S^p} \quad \text{and} \quad 
	\|BA\|^p_{S^p} \leq \|B\|^p\, \|A\|^p_{S^p}
	$$
	for every $p\in (0,1)\,.$
\end{Lemma E}	
\begin{proof}
	See Lemma $3.3$ \cite{FX}\,.
\end{proof}
The next one is the analogue of Lemma$3.4$\cite{FX} for two operators.
\begin{Lemma F}
	Let $A=A_1 + A_2$ be the sum of two finite-rank operators on a Hilbert space. 
	Then for every $\p\in (0,1)$
	$$
	\|A\|_{S^p}^p \lesssim \|A_1\|_{S^p}^p + \|A_2\|_{S^p}^p\,.
	$$
	The constant involved depends only on $p$.
	 \end{Lemma F}
 
  \section{PROOF OF THEOREM 1}
  $(1) \Rightarrow  (2) \,:\,$ 
  Assume that $H_g \in S^p$.
  Let  $ n \in \mathbb N $ such that $n > \frac{2p+4}{3p}\,.$ 
   If $\delta \in (0,m_{\tau})$  then we consider a $ (\delta, \tau)-$ lattice of $\mathbb D$ as in Lemma $A$.
According to Lemma B, choosing a positive integer $k$  we can partition the lattice 
into a finite number $M$ of infinite subsequences such that 
\begin{equation} \label{separation1}
d_{\tau}(z_m, z_{\nu})\geq 2^k\delta
\end{equation}
for any two distinct points $z_m$ and $z_\nu$ of the same subsequence.
This positive integer $k$ will be chosen appropriately large. 
It is enough to work with one of these subsequences. We denote  as $\{z_j\}_j$ 
an enumeration of its points with $|z_j|\geq \rho_0$ where $\rho_0$ is that of Lemma $C$.
Let $J$ be any finite subcollection. If $\{ e_j\}_j$ is an orthonormal set of $A^{2}_{\omega}$
we define two operators.
 One is the
 $$
 A(f)=\sum_{j\in J}\, <f,e_j>\kappa_j \,,\quad\quad f\in A^{2}_{\omega} ,
 $$
 where  $\kappa_j(z)=\frac{F_{z_j, n}(z)}{\|F_{z_j, n} \|_{A^{2}_{\omega}}},\,z\in \mathbb D\,.$
  According to Proposition \ref{operatorA} 
  \begin{align}\label{boundedA}
    \|A(f)\|^2_{A^{2}_{\omega}}
    \,& \lesssim  \|f\|^2_{A^2_{\omega}}\,, \quad f\in A^{2}_{\omega}\,.
    \end{align}
The constant in the estimation above is independent of the collection $J$\,.
 The other operator is the 
$$
B(f)= \sum_{j\in J}\,c_j\, <f,h_j>\,e_j \,,
 $$ 
 where  $\{c_j\}_{j \in J}$ are non-negative numbers and 
\begin{equation}\label{h_j}
h_j =\begin{cases} \frac{ \chi_{D_{j,\delta}}\,g\kappa_j-P_{j,\delta}(g\kappa_j)}{\| \chi_{D_{j,\delta}}\,g\kappa_j-P_{j,\delta}(g\kappa_j)\|_{L^{2}_{\omega}}} \,\,,\quad \| \chi_{D_{j,\delta}}\,g\kappa_j-P_{j,\delta}(g\kappa_j)\|_{L^{2}_{\omega}}\neq 0 \\
\\
0\quad\,\,\,,\quad  \quad \quad \text{otherwise}\end{cases}
\end{equation}
Notice that in (\ref{h_j}) we have set $D_{j,\delta}\doteqdot D(\delta\tau(z_j))$
and $P_{j,\delta} \doteqdot P_{z_j,\delta}$ in order to simplify the presentation. We will keep this notation.
Since we are talking for a large positive integer $k$ the properties (\ref{c2}), (\ref{separation1}) imply that the discs $D_{j,\delta}$ are disjoint.
As a consequence, the functions $h_j$  satisfy the properties 
$$
\|h_j\|\leq 1\,\,, \quad \quad <h_j,h_i>=0 \quad j\neq i\,\,.
$$
We  define  the  product
\begin{align}\label{product}
B H_g A(f)&=\sum_{j\in J}\,\sum_{k\in J}\,c_j\,<H_g(\kappa_i)\,,\,h_j >\,<f, e_i>\, e_j \,,\quad f \in A^2_{\omega}\,.
\end{align}
If we recall Lemma $D$ and Lemma $E$ then
\begin{align*}
\|B H_g  A\|_{S^p}^p &\lesssim \sum_{\nu} (s_{\nu}(B))^p (s_{\nu}(H_g  A))^p 
\leq \| (s_{\nu}(B))^p\|_{l^{\infty}} \|H_g A\|_{S^p}^p\\
&\leq \| (s_{\nu}(B))^p\|_{l^{\infty}}\,\, \|H_g\|_{S^p}^p \,\, \|A\|^p_{A^{2}_{\omega} \to A^{2}_{\omega}}\,.
\end{align*}
Since $\| (s_{\nu}(B))^p\|_{l^{\infty}} = \sup_{j\in J}\,c^p_j $  we get that 
\begin{align}\label{norm product}
	\|B H_g  A\|_{S^p}^p &\lesssim\,\, \sup_{j\in J}\,c^p_j \,\,.
\end{align}	
Consider the decomposition
\begin{align*}
 B H_g A(f)
=& \sum_{j\in J} \,c_j\,<H_g(\kappa_j)\,,\,h_j >\,<f, e_j >\, e_j \\
&\quad + \sum_{j,\,i\in J\,:\, j\neq i}\,c_j\,<H_g(\kappa_i)\,,\,h_j >\,<f, e_i>\, e_j = Y\, + \,Z\,\,.
\end{align*}
By Lemma $F$ 
\begin{equation}\label{FX}
\|Y\|^p_{S^p}\,\leq\,2\, \|B H_g A\|^p_{S^p}\, +\,2\,\|Z\|^p_{S^p}\,.
\end{equation}

 Since $Y$ is a positive, diagonal operator 
\begin{align*}
\|Y\|^p_{S^p}\,&= \sum_{j\in J} \,c^p_j\,|<H_g(\kappa_j)\,,\,h_j >|^p  = \sum_{j\in J} \,c^p_j\,|<g\,\kappa_j\,- \,P(g\,\kappa_j)\,,\,h_j >|^p \\
& =   \sum_{j\in J} \,c^p_j\,| <\chi_{D_{j,\delta}}g\,\kappa_j - P_{j, \delta}(g\,\kappa_j)\,,\,h_j  >|^p
\end{align*}
where in the last line we have used Lemma\ref{local bergman}. Now  we employ (\ref{h_j}),  so
\begin{align*}
\|Y\|^p_{S^p}\,& =  \sum_{j\in J} \,c^p_j\, \| \chi_{D_{j,\delta}}\,g\kappa_j-P_{j,\delta}(g\kappa_j)\|_{L^{2}_{\omega}}^p\\
&=  \sum_{j\in J} \,c^p_j\,\, \left\{ \int_{D_{j,\delta}} \,|g(z)\kappa_j(z) -P_{j,\delta}(g\kappa_j)(z)|^2\,\omega(z)\,dA(z)\,\right\}^{\frac{p}{2}}\\
&= \sum_{j\in J} \,c^p_j\,\, \left\{ \int_{D_{j,\delta}} \,|\kappa_j(z)|^2\,\omega(z)\,|g(z) -\,\frac{1}{\kappa_j(z)}\,P_{j,\delta}(g\kappa_j)(z)|^2\,dA(z)\,\right\}^{\frac{p}{2}}\\
& \asymp  \sum_{j\in J} \,c^p_j\,\, \left\{ \int_{D_{j,\delta}} \,|g(z) -\,\frac{1}{\kappa_j(z)}\,P_{j,\delta}(g\kappa_j)(z)|^2\,dA(z)\,\right\}^{\frac{p}{2}}\,.
\end{align*}
The last estimation 
 above is due to (\ref{test function1}). As a conclusion
\begin{align}\label{Y}
\|Y\|^p_{S^p}\,& \gtrsim \,\,\sum_{j\in J} \,c^p_j\,\,  F_{\delta}^p(z_j)\,.
\end{align}

Now, applying Proposition $1.29$ \cite{Zh3},
\begin{align*}
\|Z\|^p_{S^p}&\leq  \sum_{j,\,i\in J\,:\, j\neq i}\,c^p_j\,|<H_g(\kappa_i)\,,\,h_j >|^p= \sum_{j,\,i\in J\,:\, j\neq i}\,c^p_j\,|<g\,\kappa_i - P(g\,\kappa_i)\,,\,h_j   >|^p\\
&= \sum_{j,\,i\in J\,:\, j\neq i}\,c^p_j\,|<\chi_{D_{j,\delta}}g\,\kappa_i - P_{j, \delta}(g\,\kappa_i)\,,\,h_j  >|^p\\
&\leq \sum_{j,\,i\in J\,:\, j\neq i}\,c^p_j\,\left\{\int_{D_{j,\delta}}\,|g(z)\,\kappa_i(z) - P_{j, \delta}(g\,\kappa_i)(z)|^2\, \omega(z)\, dA(z)\right\}^{\frac{p}{2}}\\
&\leq \sum_{j,\,i\in J\,:\, j\neq i}\,c^p_j\,\left\{\int_{D_{j,\delta}}\,|g(z)\,\kappa_i(z) - \,\kappa_i(z)\,{\mathcal P}_{j, \delta}(g)(z)|^2\, \omega(z)\, dA(z)\right\}^{\frac{p}{2}}\,,
\end{align*}
where ${\mathcal P}_{j, \delta}$ is the projection of $L^2(D_{j,\delta})$ onto $A^2(D_{j,\delta})$. Therefore
\begin{align}
\|Z\|^p_{S^p}&\leq \sum_{j,\,i\in J\,:\, j\neq i}\,c^p_j\,\left\{\int_{D_{j,\delta}}\,|g(z)\,\kappa_i(z) - \,\kappa_i(z)\,{\mathcal P}_{j, \delta}(g)(z)|^2\, \omega(z)\, dA(z)\right\}^{\frac{p}{2}}\nonumber\\
& \leq \sum_{j,\,i\in J\,:\, j\neq i}\,c^p_j\,\sup_{z\in D_{j,\delta}}\left(|\kappa_i(z)|^p\,\omega^{\frac p2}(z)\right)
\left\{\int_{D_{j,\delta}}\,|g(z)\, - \,{\mathcal P}_{j, \delta}(g)(z)|^2\,  dA(z)\right\}^{\frac{p}{2}}\nonumber\\
& \asymp \sum_{j,\,i\in J\,:\, j\neq i}\,c^p_j\,\sup_{z\in D_{j,\delta}}\left(\frac{|F_{z_i,n}(z)|^p}{\tau^p(z_i)}\,\omega^{\frac p2}(z)\right)\,\tau^p(z_j)\,F^{p}_{\delta}(z_j)\nonumber\\
&=\sum_{j\in J} \, c^p_j\, \left\{ \sum_{\,i\in J\,:\, i\neq j}\, \frac{1}{\tau^p(z_i)}\,\sup_{z\in D_{j,\delta}}\,\left(|F_{z_i,n}(z)|^p\,\omega^{\frac p2}(z)\right)\right\}\,\tau^p(z_j)\,F^{p}_{\delta}(z_j)\,.\label{Z1}
\end{align}
 Let a $j \in J$ then for any $z_i$ with $i\in J,\,  i\neq j$, 
there is a unique $z^*_{j,i} \in \overline{D_{j,\delta}}$ such that 
$
\inf_{z \in \overline{D_{j,\delta}}}\, |z-z_i| = |z^*_{j,i} -z_i |\,.
$
 Moreover, by  (\ref{comparison tau})\,\,$
\tau(z)\asymp\tau(z_j) \,,\,\, \forall z\in \overline{D_{j,\delta}}\,.
$
According to  (\ref{test function2}), 
\begin{align*}
|F_{z_i,n}(z)|^p\,\omega^{\frac p2}(z) & \lesssim \,\left( \frac{\min(\tau(z)\,\tau(z_i))}{|z-z_i|}\right)^{\frac {3np}{2}}\,
|F_{z_i,n}(z)|^{\frac p2}\,\omega^{\frac p4}(z)  \\
 & \lesssim \, \left(\frac{1}{d_{\tau}( z^*_{j,i}\,,\, z_i)}\right)^{\frac {3np}{2}}\,|F_{z_i,n}(z)|^{\frac p2}\,\omega^{\frac p4}(z)\,,\quad \forall z\in D_{j,\delta}\,.
 \end{align*}
 From   (\ref{separation1})  we get that 
 $
 d_{\tau}( z^*_{j,i}\,,\, z_i) \geq 2^{k-2} \delta\,,
 $
 therefore
 \begin{align}\label{F}
 |F_{z_i,n}(z)|^p\,\omega^{\frac p2}(z) & \lesssim  \, \left( \frac 12 \right)^{\frac{3npk}{2}}\, |F_{z_i,n}(z)|^{\frac p2}\,\omega^{\frac p4}(z)
 \,,\quad \forall z\in D_{j,\delta}\,.
  \end{align}
  Combining (\ref{Z1}) and (\ref{F}) it  turns out that 
\begin{align}\label{Zfinal}
\|Z\|^p_{S^p}
& \lesssim \, \left( \frac 12 \right)^{\frac{3npk}{2}}\, \sum_{j\in J}\,c^p_j\, \left\{ \sum_{i\in J\,:\, i\neq j} \,\,\frac{1}{\tau^p(z_i)}\, \sup_{z\in D_{j,\delta}}|F_{z_i,n}(z)|^{\frac p2}\,\omega^{\frac p4}(z) \right\}\,\tau^p(z_j)\,F^{p}_{\delta}(z_j)\,.
\end{align}
Now, for each $j \in J$
and for $\delta_0= 3\delta$
 \begin{align*}
  \sum_{i\in J\,:\, i\neq j} &\,\,\frac{1}{\tau^p(z_i)}\, \sup_{z\in D_{j,\delta}}|F_{z_i,n}(z)|^{\frac p2}\,\omega^{\frac p4}(z)
 \leq\sum_{i:\, i\neq j, |z_i| \geq \rho_0}\,\,\frac{1}{\tau^p(z_i)}\, \sup_{z\in D_{j,\delta}}|F_{z_i,n}(z)|^{\frac p2}\,\omega^{\frac p4}(z)\\
&= \sum_{\left\{z_i : |z_j-z_i|\leq \delta_0 \tau (z_j)\right\}}\,\,\frac{1}{\tau^p(z_i)}\,
\,\sup_{z\in D_{j,\delta}}|F_{z_i,n}(z)|^{\frac p2}\,\omega^{\frac p4}(z)\\
&\quad\quad +\sum_{l\geq 0}\sum_{\left\{z_i : 2^{l}\delta_0 \tau(z_j)<|z_j-z_i|\leq 2^{l+1}\delta_0 \tau (z_j)\right\}}
\,\,\frac{1}{\tau^p(z_i)}\,
\,\sup_{z\in D_{j,\delta}}|F_{z_i,n}(z)|^{\frac p2}\,\omega^{\frac p4}(z)\,.
\end{align*}
As far as it concerns the first additive, due to (\ref{test function2}) it holds that 
$$|F_{z_i,n}(z)|^{\frac p2}\,\omega^{\frac p4}(z) \lesssim 1\,, \quad z\in D_{j,\delta},$$
 so we get
\begin{align*}
&\sum_{\left\{z_i : |z_j-z_i|\leq \delta_0 \tau (z_j)\right\}}\,\,\frac{1}{\tau^p(z_i)}\,
\,\sup_{z\in D_{j,\delta}}|F_{z_i,n}(z)|^{\frac p2}\,\omega^{\frac p4}(z)
&\lesssim \, \sum_{\left\{z_i : |z_j-z_i|\leq \delta_0 \tau (z_j)\right\}}\,\,\frac{1}{\tau^p(z_i)}
\lesssim\,\,\frac{1}{\tau^p(z_j)}
\end{align*}
where in the last inequality we have employed (\ref{comparison tau}) and the finite covering property of the $(\delta,\tau)-$lattice.
The second additive is estimated as 
\begin{align*}
&\sum_{l\geq 0}\sum_{\left\{z_i : 2^{l}\delta_0 \tau(z_j)<|z_j-z_i|\leq 2^{l+1}\delta_0 \tau (z_j)\right\}}
\,\,\frac{1}{\tau^p(z_i)}\,
\,\sup_{z\in D_{j,\delta}} \,|F_{z_i,n}(z)|^{\frac p2}\,\omega^{\frac p4}(z) \\
&\lesssim  \sum_{l\geq 0}\sum_{\left\{z_i : 2^{l}\delta_0 \tau(z_j)<|z_j-z_i|\leq 2^{l+1}\delta_0 \tau (z_j)\right\}} 
\,\frac{ \tau^{(\frac{3n}{2}-1)p}(z_i)}{|z^*_{j,i} - z_i|^{\frac{3np}{2}}}\\
& \lesssim \, \delta_0^{-\frac{3np}{2}}\, \tau^{-\frac{3np}{2}}(z_j)\,\sum_{l\geq 0}\, \left(\frac 12\right)^{\frac{3npl}{2}}\,\sum_{\left\{z_i : 2^{l}\delta_0 \tau(z_j)<|z_j-z_i|\leq 2^{l+1}\delta_0 \tau(z_j)\right\}}  \tau^{(\frac{3n}{2}-1)p}(z_i)\,.
\end{align*}
Since $n \geq \frac {4+2p}{3p}$ 
\begin{align*}
\sum_{\left\{z_i : 2^{l}\delta_0 \tau(z_j)<|z_j-z_i|\leq 2^{l+1}\delta_0 \tau(z_j)\right\}} & \tau^{(\frac{3n}{2}-1)p}(z_i)
 \leq \left(\sum_{\left\{z_i : 2^{l}\delta_0 \tau(z_j)<|z_j-z_i|\leq 2^{l+1}\delta_0 \tau(z_j)\right\}}  \tau^{2}(z_i)\right)^{\frac{(3n-2)p}{4}}\\
&\lesssim\left(A(D_{z_j, 2^{l+1}\delta_0 })\right)^{\frac{(3n-2)p}{4}} \lesssim\, \left(2^{2l}\,\tau^2(z_j)\right)^{\frac{(3n-2)p}{4}}
\end{align*}
Thus
\begin{align*}
\sum_{l\geq 0}&\sum_{\left\{z_i : 2^{l}\delta_0 \tau(z_j)<|z_j-z_i|\leq 2^{l+1}\delta_0 \tau (z_j)\right\}}
\,\,\frac{1}{\tau^p(z_i)}\,
\,\sup_{z\in D_{j,\delta}} \,|F_{z_i,n}(z)|^{\frac p2}\,\omega^{\frac p4}(z) \\
& \lesssim \, \delta_0^{-\frac{3np}{2}}\, \tau^{-\frac{3np}{2}}(z_j)\, \tau^{\frac{(3n-2)p}{2}}(z_j)\,\sum_{l\geq 1}\,\,\left(\frac 12\right)^{pl}
 \lesssim \,\, \tau^{-p}(z_j)\,.
\end{align*}
So
\begin{align*}
 \sum_{i\in J\,:\, i\neq j} &\,\,\frac{1}{\tau^p(z_i)}\, \sup_{z\in D_{j,\delta}} |F_{z_i,n}(z)|^{\frac p2}\,\omega^{\frac p4}(z) 
 \lesssim\,  \tau^{-p}(z_j)\,.
\end{align*}
 Combining the above estimations with (\ref{Zfinal}) we get that
  \begin{align}\label{Z}
  \|Z\|^p_{S^p}&\lesssim \,\left(\frac 12\right)^{\frac {3npk}{2}}\,\sum_{j\in J}\,c^p_j\, F^{p}_{\delta}(z_j)\,.
  \end{align}
    Choosing $k$ appropriately large and taking into account  (\ref{norm product}),  (\ref{FX}), (\ref{Y}), (\ref{Z})    we conclude that 
   \begin{align*}
   \sum_{j\in J} \,c^p_j\,\,  F_{\delta}^p(z_j)
   \lesssim  \sup_{j\in J} \,c^p_j\,,
   \end{align*}
    for every collection $J.$  By duality we get the desired result.
  
     $(2) \Rightarrow  (3) :$ We think as in Theorem $4.4'$ \cite{LR2}. The only difference 
     is that here $p\in (0,1$) and $\omega \in \mathcal W$.  
The boundedness of $H_g$ implies that the symbol $g$ admits a decomposition $g=g_1+g_2$ and the additives 
satisfy the condicions of case $(3)$ in Theorem A. As in Theorem $4.4'$ we recall that in the proof of the implication $(2) \Rightarrow (3)$ of Theorem $4.1$ in \cite{LR1}
the authors conclude that 
\begin{align*}
\frac{1}{|D(\frac{\delta}{5}\tau(z))|}&\,\int_{D\left(\frac{\delta}{5}\tau(z)\right)} |g_1(u)|^2\,dA(u)\\
 &\lesssim \sup\left\{ F^2_{\frac{\delta}{5}}(w) : w\in D\left(\frac{3\delta}{5}\tau(z)\right) \right\}
  \lesssim F^2_{\frac{4\delta}{5}}(z) \lesssim F^2_{\delta}(z)
\end{align*}
and 
\begin{align*}
\left|\frac{\bar{\partial}g_2(z)}{(\Delta \phi(z))^{\frac 12}}\right|
 &\lesssim \sup\left\{ F_{\frac{\delta}{5}}(w) : w\in D\left(\frac{3\delta}{5}\tau(z)\right) \right\}
 \lesssim F_{\frac{4\delta}{5}}(z) \lesssim F_{\delta}(z)\,,
\end{align*}
 for every $z\in \mathbb D$. 
Therefore, if $\{z_j\}_{j}$ is a $(\frac{\delta}{5},\tau)-$ lattice, it holds that
\begin{align*}
\sum_{j}  &\left\{\frac{1}{|D(\frac{\delta}{5}\tau(z_j))|}\,\int_{D\left(\frac{\delta}{5}\tau(z_j)\right)} |g_1(z)|^2\,dA(z) \right\}^{\frac p2}
 \lesssim \sum_j F^{p}_{\delta}(z_j)
\end{align*}
and 
\begin{align*}
\sum_{j}  &\left\{\frac{1}{|D(\frac{\delta}{5}\tau(z_j))|}\,\int_{D\left(\frac{\delta}{5}\tau(z_j)\right)} \left|\frac{\bar{\partial}g_2(z)}{(\Delta \phi(z))^{\frac 12}}\right|\,dA(z) \right\}^{\frac p2}
\lesssim \sum_j F^{p}_{\delta}(z_j)\,.
\end{align*}
\\
Under our assumption the sums on the right are finite.

$(3) \Rightarrow (1)\,:\,$ 
Although $p\in (0,1)$ and $\omega \in \mathcal W$ the proof of this implication is also based on the proof of the corresponding step of Theorem $4.4'$ \cite{LR2}.  Assume that the symbol $g$ is decomposed as in $(3)$. 
 If we recall the boundedness of the projection operator then it turns out  that
    $$
    \|H_{g_1}(f)\|_{ L^2_{\omega}}\, \leq \|g_1 f \|_{ L^2_{\omega}} + \|P(g_1 f)\|_{ L^2_{\omega}}\lesssim \,\|M_{g_1}(f)\|_{ L^2_{\omega}}
   $$
   where $M_{g_1}(f)=g_1 f$ is the multiplication operator with symbol the function $g_1$.
   On the other hand $H_{g_2}(f)$ is the solution of the $\bar{\partial}u=\bar{\partial}g_2\,f$
   with minimal $L^2_{\omega}$ norm. So we can apply Lemma $4.1.1$ \cite{H} according to which there is a solution $u$ such that 
   $$ 
   \int_{\mathbb D}\, |u(z)|^2\, \omega(z)\, dA(z)\lesssim
   \int_{\mathbb D}\, |\bar{\partial}g_2(z)\,f(z)|^2\, \frac{\omega(z)}{\Delta(\phi(z))}\, dA(z)
   $$
    and get that 
    $$\|H_{g_2}(f)\|_{ L^2_{\omega}}\leq \|u\|_{ L^2_{\omega}} \lesssim \,\|M_{\frac{\bar{\partial}g_2}{(\Delta \phi)^{\frac 12}}}(f)\|_{ L^2_{\omega}}\,.$$
     Since we are dealing with a bounded $H_g$ the conditions (\ref{CM}), (\ref{bar}) hold which means that these multiplication operators are bounded.    
    Due to the inequalities above, if 
    $M_{g_1} $ and $M_{\frac{\bar{\partial}g_2}{(\Delta \phi)^{\frac 12}}}$ belong to $S^p$ then  $H_{g_1}$ and $H_{g_2}$ belong to  $S^p$ too.
        Now,  by a standard argument,   
    $$
    M^{*}_{\psi}M_{\psi}(f)= P(|\psi|^2 f) \doteqdot T_{|\psi|^2}(f)
    $$
    where $T_{|\psi|^2}$  stands for the Toeplitz operator with symbol $|\psi|^2$.
    As a consequence of this identity we conclude that  
    $$
    M_{\psi}\in S^p \Leftrightarrow T_{|\psi|^2}\in S^{\frac p2}\,.
    $$
    Here $\psi=g_1\,$ or \,\,$\frac{\bar{\partial}g_2}{(\Delta \phi)^{\frac 12}}$\,\,\,. 
    At this point, since $p\in (0,1)$ we have to use Theorem $4.6$ of \cite{APP} according to which the conditions of $(3)$ on $g_1, g_2$ are sufficient in order
    $
    T_{|g_1|^2} \quad \text{and} \quad T_{\left|\frac{\bar{\partial}g_2}{(\Delta \phi)^{\frac 12}}\right|^2}
    $
    to be in $S^{\frac p2}$, \,$p\in(0,1)$. In that case 
    $
    M_{g_1} \, \text{and} \, M_{\frac{\bar{\partial}g_2}{(\Delta \phi)^{\frac 12}}}
    $
    are in $S^p$, hence $H_g=H_{g_1}+H_{g_2}\in S^p$. 

\end{document}